\documentclass[abstracton,a4paper]{scrartcl}

\usepackage[headsepline]{scrpage2}
\usepackage[latin1]{inputenc}
\usepackage{amsfonts}
\usepackage{amsmath}
\usepackage{amssymb}
\usepackage{amsthm}

\usepackage{url}

\usepackage{graphicx}

\usepackage[pdftex,colorlinks,breaklinks,linkcolor=black,citecolor=black,filecolor=black,menucolor=black,urlcolor=black,pdfauthor={Lizhen Ji and Andreas Weber},pdftitle={}, plainpages=false, pdfpagelabels, bookmarksnumbered=true]{hyperref}

\pagestyle{scrheadings}
\ihead{L.Ji, A. Weber: Dynamics of the heat semigroup}

\pdfstringdefDisableCommands{\def\boldsymbol{} }

\newtheorem{theorem}{Theorem}[section]

\newtheorem{proposition}[theorem]{Proposition}
\newtheorem{corollary}[theorem]{Corollary}

\theoremstyle{definition}
\newtheorem{definition}[theorem]{Definition}

\theoremstyle{remark}
\newtheorem{remark}[theorem]{\bf Remark}


\newcommand{\N}{\mathbb{N}}
\newcommand{\Q}{\mathbb{Q}}
\newcommand{\R}{\mathbb{R}}

\newcommand{\C}{\mathbb{C}}

\newcommand*\e{\mathrm{e}}

\newcommand*\re{\mathrm{Re}}
\newcommand*\im{\mathrm{Im}}

\newcommand*\grad{\mathrm{grad}}
\newcommand*\dive{\mathrm{div}}

\newcommand*\isom{\mathrm{Isom}}

\newcommand*\dom{\mathrm{dom}}
\newcommand*\tr{\mathrm{tr}}
\newcommand*\spa{\mathrm{span}}



\newcommand\ad{\mathrm{ad}}





\newcommand*\DMp{\Delta_{M,p}}                 

\newcommand*\DM{\Delta_M}

\newcommand*\DXp{\Delta_{X,p}}

\newcommand*\DXps{\Delta^{\#}_{X,p}}

\newcommand*\eDXps{\e^{-t(\Delta^{\#}_{X,p}-c)}} 
\newcommand*\LpsX{L^p_{\#}(X)}




\title{Dynamics of the heat semigroup on symmetric spaces}
\author{Lizhen Ji\footnote{Email: lji@umich.edu, 
					Address: 1834 East Hall, Ann Arbor, MI 48109-1043, USA.
		     Partially supported by NSF grant DMS 0604878}\\ 
					{\large Department of Mathematics, University of Michigan}
\and Andreas Weber\footnote{    Email: andreas.weber@math.uni-karlsruhe.de,
						   Address:  Englerstr. 2, 76128 Karlsruhe, Germany.}\\
						   {\large  Institut f\"ur Algebra und Geometrie,
						   Universit\"at Karlsruhe (TH)} }

\date{}

 \begin{document}

\maketitle 
\begin{abstract} The aim of this paper is to show that the dynamics of  $L^p$ heat semigroups ($p>2$) on a symmetric space  of non-compact type is very different from the dynamics of the $L^p$ heat semigroups if $1 < p\leq 2$. To see this, it is shown that certain shifts of the $L^p$ heat semigroups have a chaotic behavior if $p>2$, and that such a behavior is not possible in the cases $1 < p\leq 2$.
These results are compared with the corresponding situation for euclidean spaces and symmetric spaces of compact type, where such a behavior is not possible.\\

\noindent{\em Keywords:} Symmetric spaces of non-compact type, $L^p$ heat semigroup,
         spherical functions, Fourier inversion formula, chaotic semigroups.\\[1mm]
\end{abstract}		

\section{Introduction}

The aim of this paper is to show that the dynamics of  $L^p$ heat semigroups ($p>2$) on 
a symmetric space $X$ of non-compact type is very different from the dynamics of the $L^2$ heat semigroup or, more generally, of the $L^p$ heat semigroups if $1 < p\leq 2$. To see this, we prove that certain shifts of the  $L^p$ heat semigroups ($p>2$) have a chaotic behavior, whereas this is not true for any shift in the  $L^2$ case or, more generally, in the $L^p$ case if $1< p\leq 2$.\\
The different dynamics of the $L^2$ heat semigroup and the $L^p$ heat semigroups is due to
the geometry of the symmetric spaces of non-compact type, and it contrasts the fact that the corresponding
$L^p$ heat semigroups on a euclidean space or on a symmetric space of compact type do not have such a chaotic behavior.\\
A related phenomenon is the $p$-dependence of the $L^p$ spectrum of a symmetric space of non-compact type $X$. While the $L^p$ spectrum of the Laplace operator on euclidean space $\R^n$
equals $[0,\infty)$ for all $p\in [1,\infty)$, the $L^p$ spectrum of the Laplace-Beltrami operator on
$X$ varies with $p$.\\
Actually, this fact together with further information on $L^p$ eigenvalues of the Laplace-Beltrami
operator on $X$ will be used in the proof of the results indicated above. \\
The first systematic study of chaotic semigroup started with the paper \cite{MR1468101}
by Desch, Schappacher, and Webb in 1997 after some papers treating the discrete case 
appeared, cf. for example \cite{MR884467,MR1111569}. 
Further information could be found in  
\cite{MR2150930,MR2128736,MR1855839,MR2020387,MR1468101,MR1685272,MR1484673}.
From these papers we want to mention in particular the paper by Herzog \cite{MR1484673}
and the one by deLaubenfels, Emamirad, and Grosse-Erdmann \cite{MR2020387}
where the heat semigroup is studied in a different context.\\
However, it seems that the theory of chaotic dynamical systems in the sense of 
Devaney as it is used in this paper has not been applied in  'geometric'  settings up to now. \\
Our main input from the papers cited above will be two abstract results contained in  
\cite{MR1468101} and  \cite{MR2128736} which
yield a sufficient condition for chaotic behavior. Further important tools from the realm of symmetric spaces that will be needed are the spherical Fourier transform and the corresponding $L^p$ inversion formula. \\   
Similar results as in the case of symmetric spaces of non-compact type are true for non-compact locally symmetric spaces with finite volume, cf. \cite{JiWeb:nr}.

\section{Preliminaries}

\subsection{Heat semigroup on $\boldsymbol{L^p}$ spaces}\label{heat semigroup}

In this section $M$ denotes an arbitrary complete Riemannian manifold. The Laplace-Beltrami
operator $\DM := -\dive(\grad)$ with domain $C_c^{\infty}(M)$ (the set of infinitely differentiable functions 
with compact support) is essentially self-adjoint and hence, its closure $\Delta_{M,2}$ is a self-adjoint operator on the Hilbert space $L^2(M)$. Since $\Delta_{M,2}$ is positive,
$-\Delta_{M,2}$ generates a bounded analytic semigroup $\e^{-t\Delta_{M,2}}$ on $L^2(M)$ which can be defined
by the spectral theorem for unbounded self-adjoint operators. The semigroup $\e^{-t\Delta_{M,2}}$
is a {\em submarkovian semigroup} (i.e., $\e^{-t\Delta_{M,2}}$ is positive and a contraction 
on $L^{\infty}(M)$ for any $t\geq 0$) and we therefore have the following:
\begin{itemize}
\item[(1)] The semigroup $\e^{-t\Delta_{M,2}}$ leaves the set $L^1(M)\cap L^{\infty}(M)\subset L^2(M)$ 
		invariant and hence,
		$\e^{-t\Delta_{M,2}}|_{L^1\cap L^{\infty}}$ may be extended to a positive contraction semigroup
		$T_p(t)$ on $L^p(M)$ for any $p\in [1,\infty]$.
		 These semigroups are strongly continuous if $p\in [1,\infty)$ and {\em consistent}
		 in the sense that $T_p(t)|_{L^p\cap L^q} = T_q(t)|_{L^p\cap L^q}$. 
\item[(2)] Furthermore, if $p\in (1,\infty)$, the semigroup $T_p(t)$ is a bounded analytic semigroup
		with angle of analyticity $\theta_p \geq \frac{\pi}{2} - \arctan\frac{|p-2|}{2\sqrt{p-1}}$.	
\end{itemize} 
For a proof of (1) we refer to \cite[Theorem 1.4.1]{MR1103113}. For (2) see \cite{MR1224619}.
In general, the semigroup $T_1(t)$ needs not be analytic. However, if $M$ has bounded geometry
$T_1(t)$ is analytic in {\em some} sector (cf. \cite{MR924464,MR1023321}).

In the following, we denote by $-\DMp$ the generator of $T_p(t)$ and by $\sigma(\DMp)$ the spectrum of $\DMp$. Furthermore, we will write
$\e^{-t\DMp}$ for the semigroup $T_p(t)$.
Because of (2) from above,  the $L^p$ spectrum $\sigma(\DMp)$ has to be contained in the sector
\begin{multline*}
\left\{ z\in \C\setminus\{0\} : |\arg(z)| \leq \frac{\pi}{2}-\theta_p\right\}\cup\{0\} \subset\\
     \left\{ z\in \C\setminus\{0\} : |\arg(z)| \leq \arctan\frac{|p-2|}{2\sqrt{p-1}} \right\}\cup\{0\}.
\end{multline*}     

If we identify as usual the dual space of $L^p(M), 1\leq p<\infty$, with 
$L^{p'}(M), \frac{1}{p}+\frac{1}{p'}=1$, the dual operator of $\DMp$ equals $\Delta_{M,p'}$
and therefore we always have $\sigma(\DMp) = \sigma(\Delta_{M,p'})$.

\subsection{Chaotic semigroups}

There are many different definitions of chaos. We will use the following one which is
basically an adaption of Devaney's definition \cite{MR1046376} to the setting of strongly
continuous semigroups, cf. \cite{MR1468101}.

\begin{definition}
 A strongly continuous semigroup $\{T(t) : t\geq 0\}$ on a Banach space ${\cal B}$ is called {\em chaotic}
 if the following two conditions hold:
   \begin{itemize}
    \item[\textup{(i)}] There exists an $f\in {\cal B}$ such that its orbit 
    			      $\{T(t)f : t\geq 0 \}$ is dense in ${\cal B}$.	 (Then the semigroup is called {\em hypercyclic}.)
    \item[\textup{(ii)}] The set of periodic points
    			        $\{ f\in {\cal B} : \exists t>0 \mbox{~such that~} T(t)f=f \}$	 is dense in ${\cal B}$.		   
   \end{itemize}
\end{definition}

\begin{remark}\label{remark1}
 \begin{itemize}
  \item[\textup{(1)}] As with $\{T(t)f : t\geq 0 \}$  also the set $\{T(q)f : q\in\Q_{\geq 0} \}$
  				is dense, ${\cal B}$ is necessarily separable.
  \item[\textup{(2)}] The orbit of any point $T(t_0)f$ in a dense orbit $\{T(t)f : t\geq 0 \}$ is 
  				again dense in ${\cal B}$. Hence, the set of 
				points with a dense orbit is a dense subset of ${\cal B}$.
  \item[\textup{(3)}] For a separable Banach space ${\cal B}$ condition (i) in the definition 
  		above is equivalent to {\em topological transitivity}
  				of the semigroup $T(t)$, which means that for any pair of non-empty open subsets
				${\cal U,V} \subset {\cal B}$ there is a $t>0$ with
				$T(t){\cal U}\cap {\cal V}\neq \emptyset$, cf. \cite{MR1468101}.
   \item[\textup{(4)}] If both subsets 
   	$${\cal B}_0 = \left\{f\in {\cal B} : T(t)f\to 0\,\,(t\to \infty) \right\}$$ 
	and
	$${\cal B}_{\infty} = 
	\left\{f\in {\cal B} :  \forall \varepsilon >0\, \exists g\in{\cal B} \mbox{~such that~} 
				||g|| <\varepsilon, ||T(t)g -f||<\varepsilon \right\}$$
			 are dense in ${\cal B}$, the semigroup $T(t)$ has dense orbits.
			 However, this condition is not necessary, cf. \cite{MR1468101}.
	For a more detailed study of sufficient conditions for a strongly 
	continuous semigroups to be hypercyclic we refer to	
	\cite{MR2142183,MR2203889,MR2152389,MR2280828,MR2231625,MR2358980}.
   \item[\textup{(5)}] Hypercyclic semigroups exist only on infinite dimensional Banach spaces. 
   			       When looking at the Jordan canonical form of a bounded operator on
			       a (real or complex) finite dimensional Banach space, a proof of this
			       is straightforward, see e.g. \cite[Proposition 11]{MR1685272}.
		  On the other hand, on every infinite dimensional separable Banach space
		  exists a hypercyclic semigroup, cf.  \cite{MR1974641}.
   \item[\textup{(6)}] In Devaney's definition of chaotic dynamical systems it was also required that 
   	orbits depend sensitively on initial conditions. This property is redundant in this setting, see e.g.
   	       \cite{MR1157223,MR1111569}.
   \item[\textup{(7)}] For results concerning the discrete case we refer to 
   		\cite{MR2079359,MR1710637,MR1111569,MR1685272,MR2041336}.
  \end{itemize}
\end{remark}

A sufficient condition for a strongly continuous semigroup to be chaotic in terms of spectral properties
of its generator was given by Desch, Schappacher, and Webb: 
\begin{theorem}[\cite{MR1468101}] \label{thm dsw}
Let $T(t)$ denote a strongly continuous semigroup on a separable 
 Banach space ${\cal B}$ with generator $A$. Assume there is an open, connected subset
 $\Omega\subset \sigma_{pt}(A)$ of the point spectrum of $A$ and a function $F: \Omega\to {\cal B}$ such that
 \begin{itemize}
  \item[\textup{(i)}] $\Omega \cap i\R \neq \emptyset$.
  \item[\textup{(ii)}] $F(\lambda) \in \ker(A-\lambda)$ for all $\lambda \in \Omega$.
  \item[\textup{(iii)}] For all $\phi \in {\cal B'}$ in the dual space of ${\cal B}$, the mapping
  				$F_{\phi}:\Omega\to \C,\, \lambda\mapsto \phi\circ F $
				is analytic. 
				Furthermore, if for some $\phi \in {\cal B'}$ we have $F_{\phi}=0$
				then already $\phi = 0$ holds.
 \end{itemize}
 Then the semigroup $T(t)$ is chaotic.
\end{theorem}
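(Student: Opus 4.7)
The plan is to verify the two defining properties of a chaotic semigroup separately, using condition~(iii) as a Hahn--Banach-style density tool. The key observation is that $F(\lambda)\in\ker(A-\lambda)$ implies $T(t)F(\lambda)=e^{\lambda t}F(\lambda)$ for every $\lambda\in\Omega$ and $t\ge 0$. Before handling the two conditions, I would record the following density lemma: whenever $S\subset\Omega$ has an accumulation point in $\Omega$, the linear span of $\{F(\lambda):\lambda\in S\}$ is dense in $\mathcal{B}$. Indeed, by Hahn--Banach, failure of density would yield a nonzero $\phi\in\mathcal{B}'$ annihilating every $F(\lambda)$ with $\lambda\in S$; then the analytic function $F_\phi$ vanishes on a set with an accumulation point in the connected domain $\Omega$, so $F_\phi\equiv 0$ by the identity theorem, contradicting (iii).

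For the density of periodic points I would pick an open interval $(a,b)\subset\R$ with $i(a,b)\subset\Omega$ (which exists because $\Omega$ is open and meets $i\R$) and apply the density lemma to $S=\{i\tau:\tau\in\Q\cap(a,b)\}$. Any finite linear combination $g=\sum_j c_j F(i\tau_j)$ with $\tau_j=p_j/N$ for a common denominator $N$ then satisfies $T(2\pi N)g=\sum_j c_j e^{2\pi i p_j}F(i\tau_j)=g$, so $g$ is periodic with period $2\pi N>0$. Such linear combinations therefore form a dense set of periodic points, giving condition~(ii) of the definition.

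For hypercyclicity I would invoke the sufficient criterion in Remark~\ref{remark1}(4), showing that both $\mathcal{B}_0$ and $\mathcal{B}_\infty$ are dense. Since $\Omega$ is an open neighborhood of any $i\tau_0\in\Omega\cap i\R$, the sets $\Omega_-=\{\lambda\in\Omega:\re(\lambda)<0\}$ and $\Omega_+=\{\lambda\in\Omega:\re(\lambda)>0\}$ are nonempty and open, hence accumulate in $\Omega$. For $\lambda\in\Omega_-$ one has $\|T(t)F(\lambda)\|=e^{t\re(\lambda)}\|F(\lambda)\|\to 0$ as $t\to\infty$, so $F(\lambda)\in\mathcal{B}_0$; for $\lambda\in\Omega_+$ and $s>0$ the element $g_s=e^{-\lambda s}F(\lambda)$ satisfies $\|g_s\|\to 0$ as $s\to\infty$ while $T(s)g_s=F(\lambda)$, so $F(\lambda)\in\mathcal{B}_\infty$. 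Since $\mathcal{B}_0$ and $\mathcal{B}_\infty$ are linear subspaces, the density lemma finishes the argument. The single delicate ingredient is the density lemma, which depends essentially on the connectedness of $\Omega$ so that the identity theorem applies; once that is in place, the remainder of the proof is routine bookkeeping.
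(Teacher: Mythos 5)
Your proposal is correct and follows essentially the same route as the paper: the Hahn--Banach/identity-theorem density lemma for $\spa\{F(\lambda):\lambda\in S\}$, then applying it to $\{\lambda\in\Omega:\re(\lambda)<0\}$, $\{\lambda\in\Omega:\re(\lambda)>0\}$ and rational points of $\Omega\cap i\R$ to get density of ${\cal B}_0$, ${\cal B}_\infty$ and of the periodic points. The only small imprecision is the assertion that ${\cal B}_\infty$ is a linear subspace (the time $t$ in its definition may depend on the element, so this is not obvious); instead, argue directly that any finite combination $f=\sum_j c_j F(\lambda_j)$ with $\re(\lambda_j)>0$ lies in ${\cal B}_\infty$ by taking the single element $g=\sum_j c_j e^{-\lambda_j t}F(\lambda_j)$, which satisfies $T(t)g=f$ and $\|g\|\to 0$ as $t\to\infty$ --- exactly the inclusion $\spa\{F(\lambda):\re(\lambda)>0\}\subset{\cal B}_\infty$ used in the paper.
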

In  \cite{MR1468101} it was also required that the elements $F(\lambda)$, $\lambda \in \Omega$, are non-zero, but as remarked in \cite{MR2128736} this assumption is redundant. \\
In order to make this paper more comprehensive, we include the idea of the proof.
\begin{proof}
 A major role in the proof is played by the following observation: let $U\subset \Omega$
 be any subset that contains an accumulation point. Then it follows  that the
 subset ${\cal B}_{U} = \spa\{ F(\lambda) : \lambda\in U\}$   is dense in ${\cal B}$.
 Indeed, if we suppose the contrary, by the Hahn-Banach Theorem 
 there exists some $\phi\in {\cal B'}, \phi\neq 0,$ such that $\phi\circ F(\lambda)=0$ for all
 $\lambda\in U$. As $U$ contains an accumulation point, it follows from the identity
 theorem for complex analytic functions that $F_{\phi}=0$. But this is a contradiction.\\
 For the subsets $U_0 = \{\lambda\in\Omega : \re(\lambda) < 0\}$,
 $U_{\infty} = \{ \lambda\in\Omega : \re(\lambda) > 0\}$, and
 $U_{per} = \Omega \cap i\Q$ it follows now
 ${\cal B}_{U_0} \subset {\cal B}_0$, ${\cal B}_{U_{\infty}} \subset {\cal B}_{\infty}$, and
 ${\cal B}_{U_{per}} \subset \{ f\in {\cal B} : \exists t>0 \mbox{~such that~} T(t)f=f \}$.
 As all these sets are dense in ${\cal B}$, the proof is complete.
\end{proof}

In the theory of dynamical systems chaotic semigroups are highly unwanted because of their
difficult dynamics. Not much more appreciated are so called subspace chaotic semigroups.
This notion was introduced in \cite{MR2128736}:
\begin{definition} 
  A strongly continuous semigroup $T(t)$ on a Banach space ${\cal B}$ is called {\em subspace 
  chaotic} if there is a closed, $T(t)$ invariant subspace ${\cal V}\neq \{0\}$ of  ${\cal B}$ such that
  the restriction $T(t)|_{\cal V}$ is a chaotic semigroup on ${\cal V}$.
\end{definition}
Because of Remark \ref{remark1} such a subspace is always infinite dimensional.
Banasiak and Moszy\'nski showed that a subset of the conditions in Theorem \ref{thm dsw} yield a sufficient  condition
for subspace chaos:
\begin{theorem}\textup{(\cite[Criterion 3.3]{MR2128736}).}\label{thm ban}
Let $T(t)$ denote a strongly continuous semigroup on a separable 
 Banach space ${\cal B}$ with generator $A$. Assume there is an open, connected subset
 $\Omega\subset \C$ and a function $F: \Omega\to {\cal B}, F\neq 0,$ such that
 \begin{itemize}
  \item[\textup{(i)}] $\Omega \cap i\R \neq \emptyset$.
  \item[\textup{(ii)}] $F(\lambda) \in \ker(A-\lambda)$ for all $\lambda \in \Omega$.
  \item[\textup{(iii)}] For all $\phi \in {\cal B'}$ in the dual space of ${\cal B}$, the mapping
  				$F_{\phi}:\Omega\to \C,\, \lambda\mapsto \phi\circ F $
				is analytic. 
 \end{itemize}
 Then the semigroup $T(t)$ is subspace chaotic.\\
 Furthermore, the restriction of $T(t)$ to the $T(t)$ invariant subspace 
 ${\cal V} = \overline{\spa F(\Omega)}$ is chaotic.	  
\end{theorem}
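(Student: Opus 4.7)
The plan is to mimic the proof of Theorem \ref{thm dsw} but restricted to the closed subspace $\mathcal{V} = \overline{\spa F(\Omega)}$, replacing the injectivity assumption ``$F_\phi = 0 \Rightarrow \phi = 0$'' by a Hahn--Banach extension argument that only uses analyticity on $\Omega$.

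First I would verify the basic properties of $\mathcal{V}$. Since $F \neq 0$, there is some $\lambda_0 \in \Omega$ with $F(\lambda_0) \neq 0$, so $\mathcal{V} \neq \{0\}$. By condition (ii), $AF(\lambda) = \lambda F(\lambda)$ for every $\lambda \in \Omega$, hence $T(t)F(\lambda) = \e^{\lambda t}F(\lambda) \in \mathcal{V}$, so $\mathcal{V}$ is $T(t)$-invariant and $T(t)|_{\mathcal{V}}$ is a strongly continuous semigroup on $\mathcal{V}$.

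Next I would prove the key density lemma inside $\mathcal{V}$: for any $U \subset \Omega$ containing an accumulation point in $\Omega$, the linear span $\mathcal{V}_U := \spa\{F(\lambda) : \lambda \in U\}$ is dense in $\mathcal{V}$. Suppose not; by Hahn--Banach there exists $\psi \in \mathcal{V}'$, $\psi \neq 0$, with $\psi|_{\mathcal{V}_U} = 0$. Extend $\psi$ to some $\tilde{\psi} \in \mathcal{B}'$ (again by Hahn--Banach). By condition (iii), $F_{\tilde{\psi}}$ is analytic on $\Omega$ and vanishes on $U$; the identity theorem for analytic functions forces $F_{\tilde{\psi}} \equiv 0$ on $\Omega$. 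Hence $\tilde{\psi}$, and thus $\psi$, vanishes on all of $F(\Omega)$, on $\spa F(\Omega)$, and by continuity on $\mathcal{V}$, a contradiction.

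Finally, exactly as in the proof of Theorem \ref{thm dsw}, I would apply this lemma to the three sets
\[
 U_0 = \{\lambda \in \Omega : \re(\lambda) < 0\}, \quad U_\infty = \{\lambda \in \Omega : \re(\lambda) > 0\}, \quad U_{\mathrm{per}} = \Omega \cap \ii\Q,
\]
each of which contains an accumulation point in $\Omega$ thanks to condition (i) together with the openness and connectedness of $\Omega$. From $T(t)F(\lambda) = \e^{\lambda t}F(\lambda)$ one gets $\mathcal{V}_{U_0} \subset \mathcal{V}_0$, $\mathcal{V}_{U_\infty} \subset \mathcal{V}_\infty$, and every $F(\ii q)$ with $q \in \Q$ is a periodic point of $T(t)|_{\mathcal{V}}$ (with period $2\pi/|q|$ if $q \neq 0$, and fixed for $q=0$). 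Density of these three sets in $\mathcal{V}$, combined with Remark \ref{remark1}(4), then shows that $T(t)|_{\mathcal{V}}$ has dense orbits and dense periodic points, so it is chaotic on $\mathcal{V}$, and $T(t)$ is subspace chaotic on $\mathcal{B}$. The main (mild) obstacle is the Hahn--Banach step in the density lemma: it is what allows the original $\mathcal{B}'$-analyticity hypothesis to yield density inside the smaller space $\mathcal{V}$, and it is precisely the place where the injectivity condition from Theorem \ref{thm dsw} becomes unnecessary.
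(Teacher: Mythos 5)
Your proof is correct and follows essentially the route the paper indicates: the paper only cites Criterion 3.3 of Banasiak--Moszy\'nski without reproducing its proof, but your argument is exactly the sketch given for Theorem \ref{thm dsw} restricted to ${\cal V}=\overline{\spa F(\Omega)}$, with the Hahn--Banach extension step correctly replacing the injectivity hypothesis. The only detail stated slightly too weakly is the periodicity claim: you should note that finite linear combinations $\sum_j a_j F(\ii q_j)$, $q_j\in\Q$, are themselves periodic (a common period exists because the $q_j$ are rational), which is what gives density of periodic points from density of $\spa\{F(\lambda):\lambda\in U_{\mathrm{per}}\}$; this is the same level of detail elided in the paper's own sketch.
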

Note that it is not required $\Omega\subset \sigma_{pt}(A)$ here, i.e. either $F(\lambda)$ is an eigenvector  or $F(\lambda)=0$ .  But, as explained in \cite{MR2128736}, the assumption $\Omega\subset \C$ is not really weaker.\\
Theorem \ref{thm dsw} and Theorem \ref{thm ban} will be important ingredients in the proof of our
main results in Section \ref{section dynamics}.

\subsection{Symmetric spaces}\label{symmetric spaces}

Let $X$ denote a symmetric space of non-compact type. Then
$G:= \isom^0(X)$ is a non-compact, semi-simple Lie group with trivial center 
that acts transitively on $X$ and $X=G/K$, where $K\subset G$ is a maximal 
compact subgroup of $G$. We denote
their respective Lie algebras by $\mathfrak{g}$ and $\mathfrak{k}$. Given a corresponding Cartan
involution $\theta: \mathfrak{g}\to\mathfrak{g}$, we obtain the Cartan decomposition
$\mathfrak{g}=\mathfrak{k}\oplus\mathfrak{p}$ of $\mathfrak{g}$ into the eigenspaces of $\theta$. The subspace
$\mathfrak{p}$ of $\mathfrak{g}$ can be identified with the tangent space $T_{eK}X$. We assume
that the Riemannian metric $\langle\cdot,\cdot\rangle$ of $X$ in $\mathfrak{p}\cong T_{eK}X$ 
coincides with the restriction of the Killing form 
$B(Y,Z) := \tr(\ad Y\circ \ad Z ), Y, Z\in \mathfrak{g},$ to $\mathfrak{p}$. \\
For any maximal abelian subspace $\mathfrak{a}\subset \mathfrak{p}$ we refer to 
$\Sigma=\Sigma(\mathfrak{g},\mathfrak{a})$ as the set of restricted roots for the pair $(\mathfrak{g},\mathfrak{a})$,
i.e. $\Sigma$ contains all $\alpha\in \mathfrak{a}^*\setminus\{0\}$ such that
$$ \mathfrak{g}_{\alpha} := \{ Y\in \mathfrak{g} : \ad(H)(Y) = \alpha(H)Y \mbox{~for all~} H\in\mathfrak{a} \}\neq \{0\}.$$
These subspaces $ \mathfrak{g}_{\alpha}\neq \{0\}$ are called root spaces.\\
Once a positive Weyl chamber $\mathfrak{a}^+$ in $\mathfrak{a}$ is chosen, we denote by
$\Sigma^+$ the  subset of positive roots and by 
$\rho:= \frac{1}{2}\sum_{\alpha\in\Sigma^+} (\dim \mathfrak{g}_{\alpha})\alpha$ 
half the sum of the positive roots (counted according to their multiplicity).\\

A function $f: X\to \C$ is called $K$-{\em invariant} if $f(k\cdot x)= f(x)$ for all $k\in K, x\in X$
holds. As $K$ acts by isometries on $X$, the subspace $\LpsX$ of $K$-invariant functions
in $L^p(X), p\in [1,\infty),$ is closed.
Furthermore, the closed subspace $\LpsX$ is invariant under the action of the semigroup
$\e^{-t\DXp}$ (this follows from the $G$-invariance of the heat kernel).  
As the Laplacian is $G$-invariant, it follows that the restriction $\left.\e^{-t\DXp}\right|_{\LpsX}$ of the $L^p$ heat semigroup to $\LpsX$ is a strongly continuous semigroup with generator 
$-\left.\DXp\right|_{\dom(\DXp)\cap \LpsX} =: -\DXps$ (see \cite[Chapter II.2.3]{MR1721989}). 
In the following we will therefore write $e^{-t\DXps}$ instead of $\left.\e^{-t\DXp}\right|_{\LpsX}$\,. \\

To define a very important class of $K$-invariant functions, we recall the Iwasawa decomposition
$G=NAK$ of the semi-simple Lie group $G$, where $N$ is nilpotent, $A$ an 
abelian subgroup with Lie algebra $\mathfrak{a}$, and $K$ the maximal compact subgroup from above. More precisely, this means that for each $g\in G$
there are unique elements $n\in N,a\in A,$ and $k\in K$ such that $g=nak = n\exp(A(g))k$.
A  {\em spherical function} is a $K$-invariant function of the form
$$
  \varphi_{\lambda}(gK) = \int_K \e^{(i\lambda + \rho)(A(kg))} dk,\qquad \lambda\in\mathfrak{a}^*_{\C}.
$$
Because of the uniqueness of the Iwasawa decomposition it follows that the element 
$A(kg)\in\mathfrak{a}$ is well defined.
\begin{proposition}\label{prop spherical functions}
The spherical functions $\varphi_{\lambda}$ have the following properties:
 \begin{itemize}
  \item[\textup{(a)}] $\varphi_{\lambda}\in C^{\infty}(X).$ 
  \item[\textup{(b)}] $\Delta_X \varphi_{\lambda} = 
  				(\langle\lambda,\lambda\rangle + ||\rho||^2)\varphi_{\lambda}$
  \item[\textup{(c)}] Let $C(\rho)$ denote the convex hull of the points $s\rho\in\mathfrak{a}^*, s\in W$,
  			    where $W$ 
  			    denotes the Weyl group. Then for $p>2$ and any 
			    $\lambda \in \mathfrak{a}^* + i(1-\frac{2}{p})C(\rho)$ the spherical function 
			    $\varphi_{\lambda}$ is contained in $L^p(X)$. 
 \end{itemize}
\end{proposition}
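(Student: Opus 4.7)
The plan is to prove (a) and (b) directly from the integral representation of $\varphi_\lambda$, and to treat (c) by combining Harish-Chandra's asymptotic expansion with the polar-coordinate volume formula on $X$.

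For (a), since the Iwasawa decomposition $G = NAK$ is a diffeomorphism, the map $A \colon G \to \mathfrak{a}$ is smooth, so the integrand $k \mapsto e^{(i\lambda + \rho)(A(kg))}$ is continuous in $k$ and smooth in $g$. Compactness of $K$ lets every $g$-derivative commute with the integral, giving $\varphi_\lambda \in C^\infty(X)$.

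For (b), I would first verify that the horocyclic plane wave $\psi_\lambda(g) := e^{(i\lambda + \rho)(A(g))}$, lifted to $G$, satisfies $\Delta_X \psi_\lambda = (\langle \lambda,\lambda\rangle + \|\rho\|^2)\psi_\lambda$. This is the standard $NAK$-coordinate computation: the radial part of $\Delta_X$ acting on functions depending only on the $A$-factor takes the form $-\partial_H^2 + 2\rho(\partial_H)$ on $\mathfrak{a}$, and the $\rho$-shift in the exponent of $\psi_\lambda$ is exactly what cancels the first-order term and produces the clean eigenvalue, with $\|\rho\|^2$ arising from the Killing-form normalization fixed in Section \ref{symmetric spaces}. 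Since $\Delta_X$ commutes with left $K$-translation, averaging $\psi_\lambda$ over $K$ yields $\varphi_\lambda$ with the same eigenvalue.

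For (c), I would invoke Harish-Chandra's asymptotic expansion
$$\varphi_\lambda(\exp H) = \sum_{s \in W} c(s\lambda)\, e^{(is\lambda - \rho)(H)} + \text{l.o.t.}$$
as $H \to \infty$ strictly inside $\mathfrak{a}^+$, and combine it with the polar decomposition together with the asymptotic $\prod_{\alpha \in \Sigma^+}\sinh(\alpha(H))^{\dim\mathfrak{g}_\alpha} \asymp e^{2\rho(H)}$ away from the walls. Integrability in $L^p$ then reduces to the pointwise condition
$$p\,\mathrm{Im}(s\lambda)(H) + (p-2)\rho(H) > 0 \quad \text{on } \mathfrak{a}^+, \ \forall s \in W.$$
Writing $\mathrm{Im}(\lambda) = (1 - 2/p)\sum_{t \in W} c_t\, t\rho$ as a convex combination in $(1 - 2/p)C(\rho)$, this follows from the elementary fact $w\rho(H) + \rho(H) \geq 0$ on $\mathfrak{a}^+$ for every $w \in W$ (the minimum $-\rho(H)$ being attained by the longest Weyl element), with strictness on the interior.

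The main obstacle will be controlling $\varphi_\lambda$ on the walls of $\mathfrak{a}^+$ and near the boundary of the strip $(1 - 2/p)C(\rho)$, where the dominant exponentials in Harish-Chandra's expansion coalesce and the reduction above degenerates. A clean workaround is to appeal to a uniform Anker-type majorization of the form $|\varphi_\lambda(\exp H)| \leq C\, \varphi_0(\exp H)\, e^{\max_{s \in W}\mathrm{Im}(s\lambda)(H)}$ valid on all of $\mathfrak{a}^+$, which absorbs the wall behavior and delivers the $L^p$ bound in one stroke at the cost of citing a nontrivial harmonic-analytic estimate.
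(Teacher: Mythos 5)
For (a) and (b) your argument is correct and is the standard one; note that the paper itself gives no proof of this proposition at all (it cites Helgason for (a), (b) and Taylor for (c)), so a self-contained argument is in principle welcome, and your Iwasawa/plane-wave computation with $K$-averaging is exactly the classical route.

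The genuine gap is in (c), at the boundary of the polytope. Your reduction needs the strict inequality $p\,\mathrm{Im}(s\lambda)(H)+(p-2)\rho(H)>0$ for every $s\in W$ and every $H\in\overline{\mathfrak{a}^+}\setminus\{0\}$, and you derive it from ``$w\rho(H)+\rho(H)\geq 0$ with strictness on the interior''. That strictness is false for the longest element $w_0$: since $w_0\rho=-\rho$, one has $w_0\rho+\rho\equiv 0$ on all of $\mathfrak{a}$. Consequently, when $\mathrm{Im}\,\lambda$ lies on the boundary of $(1-\frac{2}{p})C(\rho)$ --- for instance at a vertex $\mathrm{Im}\,\lambda=(1-\frac{2}{p})t\rho$ --- choosing $s$ with $st=w_0$ makes $p\,\mathrm{Im}(s\lambda)(H)+(p-2)\rho(H)$ vanish identically, and your integrability criterion fails. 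The proposed workaround does not repair this: the uniform majorization $|\varphi_\lambda|\leq\varphi_{i\,\mathrm{Im}\lambda}\leq e^{(\mathrm{Im}\lambda)^+(H)}\varphi_0(\exp H)$ (which is indeed available and does take care of the walls, and is the right substitute for the asymptotic expansion, whose coefficients and error terms are not uniform near non-regular $\lambda$) gives at such boundary points only $|\varphi_\lambda(\exp H)|\lesssim (1+\|H\|)^{d}\,e^{-\frac{2}{p}\rho(H)}$, and then $|\varphi_\lambda|^p\,\delta(H)$ with $\delta(H)=\prod_{\alpha\in\Sigma^+}\sinh(\alpha(H))^{\dim\mathfrak{g}_\alpha}\asymp e^{2\rho(H)}$ is merely of polynomial size, hence not integrable.

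This is not a removable defect of your method: on $\mathbb{H}^3$, where $\varphi_\lambda(r)=\sin(\lambda r)/(\lambda\sinh r)$ and $\rho=1$, one checks directly that for $\mathrm{Im}\,\lambda=1-\frac{2}{p}$ (with arbitrary real part) one has $|\varphi_\lambda(r)|\gtrsim e^{-\frac{2}{p}r}$ while the volume density is $\asymp e^{2r}$, so $\varphi_\lambda\notin L^p$. Thus the closed-hull version of (c) cannot be proved; what your argument does establish, once the global majorization replaces the expansion, is the statement for $\mathrm{Im}\,\lambda\in(1-\frac{2}{p})\,\mathrm{int}\,C(\rho)$: there $\max_{s}\mathrm{Im}(s\lambda)(H)\leq\bigl((1-\frac{2}{p})-\varepsilon\bigr)\rho(H)$ for some $\varepsilon>0$, and the integrand decays like a polynomial times $e^{-p\varepsilon\rho(H)}$. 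You should therefore either restate (c) with the interior of $C(\rho)$ or check the exact boundary convention in the cited reference; note that everything the paper uses later (the strip $S_p$ and the region $\Omega'$ are defined by strict inequalities) only requires the interior case, so this restriction costs nothing.
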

The proof of (a) and (b) can be found in \cite{MR754767}. For a proof of (c) see
\cite[Proposition 2.2]{MR1016445} or \cite{MR954385}. 

Taylor also observed in \cite[Proposition 2.2]{MR1016445} also that from Proposition \ref{prop spherical functions} (b) and (c) we may conclude that each 
$\varphi_{\lambda},\, \lambda\in \mathfrak{a}^* + i(1-\frac{2}{p})C(\rho)$,  is an eigenfunction
of $\DXp$  for $p>2$: From  \cite[Theorem 3.9]{MR705991} it follows 
(put $v(t,x)=\e^{-(\langle\lambda,\lambda\rangle + ||\rho||^2)t}\varphi_{\lambda}$) that
$$
 \e^{-t\DXp}\varphi_{\lambda} = \e^{-(\langle\lambda,\lambda\rangle + ||\rho||^2)t}\varphi_{\lambda},
$$
and hence 
$$
 \DXp \,\varphi_{\lambda} = \left(\langle\lambda,\lambda\rangle + ||\rho||^2\right) \varphi_{\lambda}.
$$ 
From this observation, we obtain that the interior of the parabolic region
\begin{equation}
 P_p = \left\{ ||\rho||^2 + z^2 : z\in\C, |\im z|\leq ||\rho||\cdot |\frac{2}{p}-1|\right\}\subset \C.
\end{equation} 
is contained in the point spectrum $\sigma_{pt}(\DXp)$ of $\DXp$ if $p>2$. \\
Taylor actually proved in \cite{MR1016445} that the spectrum $\sigma(\DXp)$ of $\DXp$ coincides with $P_p$ for all $p\in [1,\infty)$.

Two important tools, that will be used later are the {\em spherical Fourier transform} and its
$L^p$ {\em inversion formula}.

\begin{theorem} Let $p\in [1,2], f\in \LpsX$ and 
$\lambda\in\mathfrak{a}^* + i\left(1-\frac{2}{p}\right)C(\rho)$. Then
$$
 {\cal F}f(\lambda) := \int_X f(x)\varphi_{-\lambda}(x) dx
$$
exists and the map $\lambda\mapsto  {\cal F}f(\lambda)$ is analytic.
\end{theorem}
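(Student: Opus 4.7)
The strategy combines Hölder's inequality with Morera's theorem, using Proposition \ref{prop spherical functions}(c) to control $\|\varphi_{-\lambda}\|_{p'}$ on the relevant tube domain.

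Let $p' \in [2,\infty]$ be the conjugate of $p$. For $p \in [1,2)$, $p' > 2$, and the natural estimate is
$$|{\cal F}f(\lambda)| \leq \|f\|_p\, \|\varphi_{-\lambda}\|_{p'}$$
via Hölder, followed by invoking Proposition \ref{prop spherical functions}(c) applied to $p'$: $\varphi_{\mu} \in L^{p'}(X)$ whenever $\mu \in \mathfrak{a}^* + i(1-\tfrac{2}{p'})C(\rho)$. To verify that $-\lambda$ lies in this set, I would first observe that $C(\rho) = -C(\rho)$; this is because the longest element $w_0$ of the little Weyl group maps $\Sigma^+$ to $\Sigma^-$ while preserving restricted-root multiplicities, so $w_0\rho = -\rho$ and hence the Weyl orbit of $\rho$ is symmetric about the origin. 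Combined with $(1-\tfrac{2}{p}) = -(1-\tfrac{2}{p'})$, this symmetry gives $(1-\tfrac{2}{p})C(\rho) = (1-\tfrac{2}{p'})C(\rho)$, so indeed $-\lambda \in \mathfrak{a}^* + i(1-\tfrac{2}{p'})C(\rho)$ and $\varphi_{-\lambda} \in L^{p'}(X)$, yielding existence. The boundary case $p=2$ is degenerate: $\varphi_{-\lambda}\notin L^2(X)$ in general, so the integral does not converge absolutely; instead ${\cal F}f$ is defined as the $L^2$-extension from $C_c^\infty(X)^K$ via the Plancherel--Helgason theorem, and the analyticity claim is vacuous on the real domain $\mathfrak{a}^*$.

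For analyticity on the interior of the tube, the pointwise map $\lambda \mapsto \varphi_{-\lambda}(x) = \int_K \e^{(-i\lambda+\rho)(A(kg))}\,dk$ is entire on $\mathfrak{a}^*_{\C}$. I would apply Morera's theorem in each complex direction: for any closed triangle $\gamma$ lying in a complex line through a point of the interior of the tube, Fubini's theorem gives
$$\oint_\gamma {\cal F}f(\lambda)\,d\lambda = \int_X f(x)\oint_\gamma \varphi_{-\lambda}(x)\,d\lambda\,dx = 0,$$
the inner contour integral vanishing by Cauchy. Together with continuity of ${\cal F}f$ (from dominated convergence plus the Hölder bound), Morera yields separate analyticity, and hence joint analyticity by Hartogs.

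The main obstacle is the Fubini interchange in the Morera step: I need a uniform bound $\sup_{\lambda\in\gamma}\|\varphi_{-\lambda}\|_{p'}<\infty$ and, for continuity, continuous $L^{p'}$-dependence of $\lambda \mapsto \varphi_{-\lambda}$ on compact subsets of the tube. Both follow from the same Harish-Chandra-type estimates that underlie Proposition \ref{prop spherical functions}(c) (cf.\ \cite{MR954385,MR1016445}), which give pointwise majorants for $|\varphi_{-\lambda}(x)|$ that are locally uniform in $\lambda$; once these are in place, the rest of the argument is routine Hölder, Fubini, and Morera.
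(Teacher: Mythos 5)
Your proposal is correct in outline, and it is in fact \emph{more} than the paper offers: the paper gives no proof of this theorem at all, but simply cites \cite[Corollary 8.2]{MR754767} for the $L^1$ case. Your H\"older--Morera argument is essentially the standard proof that lies behind that citation, extended from $L^1$ to $1<p<2$ by dualizing Proposition \ref{prop spherical functions}(c): the reduction $-\lambda\in\mathfrak{a}^*+i(1-\tfrac{2}{p'})C(\rho)$ is correct (and even simpler than you make it, since $1-\tfrac2p=-(1-\tfrac2{p'})$ already does the job without invoking $w_0\rho=-\rho$, though that symmetry argument is also valid), and the locally uniform $L^{p'}$ bounds you need for Fubini/Morera do follow from $|\varphi_{\xi+i\eta}|\leq\varphi_{i\eta}$ together with the Harish-Chandra-type estimates in \cite{MR954385,MR1016445}; your reading of the degenerate endpoint $p=2$ is also sensible. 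One caveat worth recording: Proposition \ref{prop spherical functions}(c) as stated (closed tube) is slightly too generous --- at extreme boundary points, e.g.\ $\mathrm{Im}\,\lambda=(1-\tfrac2p)\rho$, the Harish-Chandra expansion shows $|\varphi_\lambda|^p$ decays exactly like the reciprocal of the volume density, so $\varphi_\lambda\notin L^p(X)$ and your H\"older bound (hence absolute convergence for \emph{every} $f\in\LpsX$) genuinely fails there; the clean statement is existence and analyticity on the interior of the tube (the closed tube being fine for $p=1$ by Helgason--Johnson boundedness). This imprecision is inherited from the paper's own formulation, not introduced by you, and it is harmless for the application in Theorem \ref{thm rank one}, where only the open strip $S_p$ (inside the interior) is used.
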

For the analyticity of  $\lambda\mapsto{\cal F}f(\lambda)$
see \cite[Corollary 8.2]{MR754767} where the $L^1$ case is treated.\\
The $L^p$ inversion formula for the spherical Fourier transform was proved by
Stanton and Tomas (by $c(\lambda)$ we denote Harish-Chandra's $c$ function):

\begin{theorem}\textup{\cite[Theorem 3.3]{MR518528}}\label{thm stantom}
Let $p\in [1,2), f\in \LpsX $ with ${\cal F}f\in L^1(\mathfrak{a}^*_{+},|c(\lambda)|^{-2}d\lambda)$ and
let $x$ be in the Lebesgue set of $f$. Then we have
$$
 f(x) = \int_{\mathfrak{a}^*_{+}} {\cal F}f(\lambda) \varphi_{\lambda}(x) |c(\lambda)|^{-2}d\lambda.
$$
\end{theorem}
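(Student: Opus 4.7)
The plan is to reduce the $L^p$ inversion to the classical Harish-Chandra Plancherel formula (the $L^2$ case) by smoothing $f$ with a spherical approximate identity, using the integrability hypothesis ${\cal F}f\in L^1(|c(\lambda)|^{-2}d\lambda)$ to furnish the dominating function on the Fourier side and the Lebesgue-set hypothesis to control pointwise convergence on the physical side.

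First I would verify that the right-hand side
$$
 g(x) := \int_{\mathfrak{a}^*_{+}} {\cal F}f(\lambda)\,\varphi_{\lambda}(x)\,|c(\lambda)|^{-2}\,d\lambda
$$
defines a bounded continuous $K$-invariant function on $X$. This rests on the elementary bound $|\varphi_{\lambda}(x)|\leq \varphi_0(x)$ for real $\lambda\in\mathfrak{a}^*$, which follows at once from the Iwasawa integral representation of $\varphi_{\lambda}$ recalled in Section \ref{symmetric spaces}; the integrand is then dominated by the integrable function $|{\cal F}f(\lambda)|\,\varphi_0(x)\,|c(\lambda)|^{-2}$, and the dominated convergence theorem also gives continuity of $g$ in $x$.

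Second, I would fix a $K$-bi-invariant smooth approximate identity $(\chi_n)$ on $G$ (non-negative, smooth, compactly supported, with $\int\chi_n = 1$ and $\supp\chi_n$ shrinking to $\{e\}$) and set $f_n := f*\chi_n$. Each $f_n$ is smooth, $K$-invariant, and lies in $L^r_\#(X)$ for every $r\in [p,\infty]$, and the convolution theorem gives
$$
 {\cal F}f_n(\lambda) = {\cal F}f(\lambda)\,{\cal F}\chi_n(\lambda),
$$
with $|{\cal F}\chi_n(\lambda)|\leq 1$ on $\mathfrak{a}^*$ and ${\cal F}\chi_n(\lambda)\to \varphi_{-\lambda}(eK)=1$ locally uniformly as $n\to\infty$. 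Since $\chi_n$ is compactly supported, the spherical Paley-Wiener theorem makes ${\cal F}\chi_n$ entire of rapid decay on $\mathfrak{a}^*$, so ${\cal F}f_n \in L^1(\mathfrak{a}^*_{+},|c(\lambda)|^{-2}d\lambda)$ (the polynomial growth of $|c(\lambda)|^{-2}$ being absorbed by the rapid decay of ${\cal F}\chi_n$). Hence the classical $L^2$ inversion applies pointwise to $f_n$ and yields
$$
 f_n(x) = \int_{\mathfrak{a}^*_{+}} {\cal F}f(\lambda)\,{\cal F}\chi_n(\lambda)\,\varphi_{\lambda}(x)\,|c(\lambda)|^{-2}\,d\lambda.
$$

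Finally I would pass to the limit $n\to\infty$. On the Fourier side, dominated convergence with dominator $|{\cal F}f(\lambda)|\,\varphi_0(x)\,|c(\lambda)|^{-2}$ gives convergence of the right-hand side to $g(x)$. On the physical side, standard Lebesgue differentiation for radial convolutions on a Riemannian manifold with bounded geometry forces $f_n(x)\to f(x)$ at every point $x$ in the Lebesgue set of $f$. Combining the two limits yields $f(x)=g(x)$ at each such $x$, which is the desired formula. The main obstacle will be the careful selection of $(\chi_n)$: it must simultaneously behave as a Lebesgue-differentiation kernel on the physical side and possess the Paley-Wiener decay needed on the Fourier side, and it must also be $K$-bi-invariant so that spherical convolution is available. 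This delicate matching is the technical core of the argument and is essentially why Stanton-Tomas use explicit spherical heat or Gaussian kernels rather than arbitrary smooth mollifiers.
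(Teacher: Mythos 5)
First, a point of orientation: the paper does not prove this statement at all --- it is quoted verbatim from Stanton--Tomas \cite[Theorem 3.3]{MR518528} and used as a black box in the proof of Theorem \ref{thm rank one}. So there is no ``paper proof'' to compare against; what can be assessed is whether your argument is a viable proof of the cited result, and how it relates to the route Stanton and Tomas actually take.

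Your mollifier scheme is sound and, granted the hypothesis ${\cal F}f\in L^1(\mathfrak{a}^*_+,|c(\lambda)|^{-2}d\lambda)$, it does yield the inversion formula at Lebesgue points; but it is a genuinely different (and easier) route than the original one. Stanton--Tomas prove a stronger summability statement: they control Gauss--Weierstrass (heat) and Abel means of the spherical Fourier integral by maximal functions, using their expansions of $\varphi_\lambda$, and the inversion under the $L^1$ hypothesis then follows by removing the summability factor. Your argument replaces the heat kernel by compactly supported $K$-bi-invariant bumps, which trivializes the maximal-function input (one only needs $\chi_n\leq C\,|B(o,r_n)|^{-1}\mathbf{1}_{B(o,r_n)}$, so the Lebesgue-set condition applies directly); contrary to your closing worry, there is no delicate matching problem here --- Paley--Wiener decay of ${\cal F}\chi_n$ is automatic for any $C_c^\infty$ bump, and in fact you do not even need it, since $|{\cal F}\chi_n(\lambda)|\leq\sup_{B(o,r_n)}\varphi_0$ is uniformly bounded and ${\cal F}f\in L^1(|c(\lambda)|^{-2}d\lambda)$ already dominates the Fourier side. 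The points that do need to be written out carefully, and which you gloss over, are: (i) the convolution identity ${\cal F}(f*\chi_n)={\cal F}f\cdot{\cal F}\chi_n$ for $f\in \LpsX$, $p<2$, which requires Fubini justified by $\varphi_0\in L^{p'}(X)$ (here $p'>2$ is exactly where $p<2$ enters) together with $\varphi_0(hu)\leq C\varphi_0(h)$ for $u$ in a fixed compact set, and the functional equation of $\varphi_\lambda$; (ii) the consistency of the pointwise-defined transform of $f_n=f*\chi_n$ (which lies in $L^p\cap L^\infty\subset L^2$ but need not be in $L^1$) with the $L^2$ Plancherel transform, before invoking the a.e.\ $L^2$ inversion and upgrading it to everywhere by continuity of both sides. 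These are standard but not free; with them supplied, your proof is complete and somewhat more elementary than the original, at the cost of not giving the summability theorems that are the main content of Stanton--Tomas.
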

Recall, that a point $x\in X$ is said to be contained in the Lebesgue set of $f$ if
$$
  \lim_{r\to 0} |B(x,r)|^{-1}\int_{B(x,r)} |f(y)- f(x)| dy \neq 0,
$$
where $B(x,r)$ denotes the geodesic ball with center $x$ and radius $r>0$.
As remarked in \cite{MR518528}, the Lebesgue set of an $L^p$ function has co-measure zero.

\section{Dynamics of the heat semigroup}\label{section dynamics}

\begin{theorem}\label{thm rank one}
Let $X=G/K$ denote a symmetric space of non-compact type with rank one.
\begin{itemize}
 \item[\textup{(a)}] Let $p>2$. Then there
  is a $c_p>0$ such that for all $c> c_p$ the semigroup
  $$
   \eDXps: \LpsX \to \LpsX
  $$
  is chaotic.
  \item[\textup{(b)}] Let $c\in\R$. Then the semigroup
   $$
    e^{-t(\Delta_{X,2}^{\#} - c)} : L^2_{\#}(X) \to L^2_{\#}(X) 
   $$
   is not chaotic.
  \item[\textup{(c)}] Let $1< p < 2$ and $c\in \R$. Then the semigroup
     $$
       \eDXps: \LpsX \to \LpsX
     $$
     is not hypercyclic (and therefore in particular not chaotic).
\end{itemize}
\end{theorem}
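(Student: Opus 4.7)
My plan is to treat the three parts separately, using the Desch--Schappacher--Webb criterion (Theorem~\ref{thm dsw}) for (a) and spectral/duality arguments for (b) and (c). For part~(a) I would apply Theorem~\ref{thm dsw} to $A := -(\DXps - c)$ with $\Omega$ equal to the interior of the translated parabolic region $P_p - c$ and $F(\mu) := \varphi_\lambda$, where $\lambda$ is any square root of $\mu + c - ||\rho||^2$. Proposition~\ref{prop spherical functions}(c) together with the Taylor observation recorded just after it already yields both $\varphi_\lambda \in \LpsX$ and $\DXps\varphi_\lambda = (\lambda^2 + ||\rho||^2)\varphi_\lambda$, so $(\DXps - c)F(\mu) = \mu F(\mu)$, which is condition~(ii). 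The rank--one hypothesis is crucial here: the Weyl group acts on $\mathfrak{a}^*_{\C} \cong \C$ by $\lambda \mapsto -\lambda$ with $\varphi_{-\lambda} = \varphi_\lambda$, so the two square roots give the same spherical function and $F$ is single-valued. A short computation shows that the minimum real part attained on $P_p$ equals $c_p := ||\rho||^2 \bigl(1 - (1-\tfrac{2}{p})^2\bigr) = 4||\rho||^2(p-1)/p^2$, and as soon as $c > c_p$, the set $\Omega$ is open, connected and meets $i\R$, yielding~(i).

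Condition~(iii) splits into analyticity of $F$ in $\mu$ and the separation statement $F_\phi \equiv 0 \Rightarrow \phi = 0$. Analyticity is the observation that the even analytic map $\lambda \mapsto \varphi_\lambda$ factors through $\lambda^2$: writing $\varphi_\lambda = g(\lambda^2)$ gives $F(\mu) = g(\mu + c - ||\rho||^2)$. For separation, identify $(\LpsX)'$ with $L^{p'}_{\#}(X)$ via the usual pairing, $p' = p/(p-1) \in (1,2)$, and represent $\phi$ by $h \in L^{p'}_{\#}(X)$. Using $\varphi_{-\lambda} = \varphi_\lambda$, one obtains $F_\phi(\mu) = \int_X h\,\varphi_\lambda\,dx = {\cal F}h(\lambda)$, and the identity $2/p' - 1 = 1 - 2/p$ shows that the strip in which $\varphi_\lambda \in L^p(X)$ coincides with the strip on which ${\cal F}h$ is defined and analytic. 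Therefore vanishing of $F_\phi$ on $\Omega$ forces ${\cal F}h \equiv 0$ on that strip by the identity theorem for holomorphic functions, whence Theorem~\ref{thm stantom} (applied to the trivially integrable zero transform) gives $h = 0$ on its Lebesgue set, i.e.\ $\phi = 0$, and Theorem~\ref{thm dsw} closes~(a).

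Parts~(b) and~(c) both rely on two standard facts: a hypercyclic $C_0$-semigroup $T(\cdot)$ has each $T(t)$, $t > 0$, hypercyclic as a single operator (Conejero, M\"uller and Peris), and a hypercyclic bounded operator $S$ on a Banach space must satisfy $\sigma_{pt}(S^*) = \emptyset$ (else $\langle S^n f, \phi\rangle = \lambda^n\langle f,\phi\rangle$ is never dense in $\C$). For (b), $\Delta_{X,2}^{\#} - c$ is self-adjoint on $L^2_{\#}(X)$ for every $c \in \R$, so each $e^{-t(\Delta_{X,2}^{\#} - c)}$ is normal, and by a classical theorem of Kitai no normal operator on a Hilbert space can be hypercyclic. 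For (c) with $1 < p < 2$ and $p' = p/(p-1) > 2$, reflexivity of $L^p(X)$ and the duality $(\DXp)^* = \Delta_{X,p'}$ from Section~\ref{heat semigroup} give $T(t)^* = e^{-t(\Delta_{X,p'}^{\#} - c)}$. The very same Taylor observation used in~(a), applied this time with $p'$ in place of $p$, produces an abundance of spherical eigenfunctions of $\Delta_{X,p'}^{\#}$ in $L^{p'}_{\#}(X)$, so $\sigma_{pt}(T(t)^*) \neq \emptyset$ for every $t > 0$, contradicting hypercyclicity of $T(t)$.

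The main obstacle lies in condition~(iii) for part~(a), whose two halves both use geometric inputs specific to $X$: the analyticity uses the Weyl-evenness $\varphi_\lambda = \varphi_{-\lambda}$, which holds only because $|W| = 2$ in rank one, and the separation step relies on the $L^{p'}$ inversion formula of Stanton--Tomas (Theorem~\ref{thm stantom}). By contrast, (b) and (c) are short spectral arguments once one observes that for $1 < p < 2$ the \emph{dual} side already sits in the chaotic regime $p' > 2$ of part~(a), which is precisely what obstructs hypercyclicity on the primal side.
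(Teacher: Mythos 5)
Your proposal is correct, and for part (a) it follows the paper's strategy (Desch--Schappacher--Webb criterion, spherical functions as eigenfunctions via Taylor's observation, duality with the spherical transform, Stanton--Tomas inversion to get injectivity of $\phi\mapsto F_\phi$), with one genuine variation: where the paper takes $\Omega$ to be the interior of $(P_p-c)$ \emph{slit along the ray} $\{z\in\R: z\le \|\rho\|^2-c\}$ precisely so that an analytic branch of $\sqrt{z+c-\|\rho\|^2}$ exists, you instead exploit the rank-one Weyl symmetry $\varphi_\lambda=\varphi_{-\lambda}$ to see that $F$ and each $F_\phi$ factor through $\lambda^2$, so $F$ is well defined and weakly analytic on the whole interior of $P_p-c$ with no branch cut; both routes are valid and end at the same place (vanishing of ${\cal F}h$ on a strip containing the real axis, then inversion gives $h=0$). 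For (b) and (c) you take a genuinely different path from the paper: the paper disposes of (b) by the necessary condition that the point spectrum of the generator of a chaotic semigroup must meet $i\R$ in an infinite set (which is finite here since $\sigma(\Delta_{X,2}^{\#}-c)\subset\R$), and of (c) by applying DSW's Theorem 3.3 directly at the semigroup level: hypercyclicity would force $\sigma_{pt}(\Delta_{X,p'}^{\#}-c)=\emptyset$, contradicting the existence of spherical eigenfunctions for $p'>2$. You instead descend to single operators via the Conejero--M\"uller--Peris theorem and use Kitai's result (no normal operator is hypercyclic) for (b), and the operator-level fact $\sigma_{pt}(T(t)^*)\neq\emptyset$ for (c), noting correctly that $\varphi_\lambda$ is an eigenvector of the semigroup operator itself by the quoted identity $\e^{-t\Delta_{X,p'}}\varphi_\lambda=\e^{-t(\langle\lambda,\lambda\rangle+\|\rho\|^2)t/t}\varphi_\lambda$ (Taylor's observation). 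This buys you a slightly stronger conclusion in (b) --- the $L^2$ semigroup is not even hypercyclic, whereas the paper only rules out chaos --- at the price of importing two external results (CMP and Kitai) that the paper does not need; note also that in (c) the detour through single operators is unnecessary, since DSW's Theorem 3.3 applies to the semigroup directly, exactly as the paper does. The minor sign sloppiness in identifying the point spectrum of the generator $-(\DXps-c)$ with that of $\DXps-c$ is shared with the paper and is harmless, since both sets are obtained from each other by $z\mapsto -z$, which preserves openness, connectedness and intersection with $i\R$.
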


\begin{proof}
 For the proof of part (a) we will check the conditions of Theorem \ref{thm dsw}.\\
 As the rank of $X$ equals one, we have $\mathfrak{a}_{\C} \cong \C$ and this
 identification will be used in what follows. More precisely, any $z\in \C$ will be identified
 with $\lambda\in \mathfrak{a}_{\C}^*$ (and vice versa) where $\lambda(H_0) = z$ for the (unique)
 $H_0$ in $\mathfrak{a}^+$ with $||H_0||=1$.

 Let  $p>2$. As we have seen in Section \ref{symmetric spaces}, 
 the point spectrum of $\DXps$ contains all points in the interior of the parabolic region
 $P_p$ and corresponding eigenfunctions are given by certain spherical functions.
 The apex of the parabolic region $P_p$ is at the point
 $$
  c_p = ||\rho||^2 -  ||\rho||^2\cdot \left|\frac{2}{p}-1\right|^2 = \frac{4||\rho||^2}{p}\left(1-\frac{1}{p}\right)
 $$ 
 and hence, the point spectrum of $(\DXps - c)$ intersects the imaginary axis for all
 $c > c_p$.\\
 Let us assume in the following $c>c_p$ and let $\Omega$ denote the interior of the set
 $$
  \left(P_p -c \right)\setminus \{z\in \R : z\leq ||\rho||^2 - c \}. 
 $$
 Then $\Omega$ is an open, connected subset of the point spectrum of $(\DXps - c)$
 that intersects the imaginary axis and it remains to 
  define an analytic mapping $F: \Omega \to \LpsX$ satisfying (iii) of Theorem \ref{thm dsw}. \\
  For this, we put
  $$
   F(z) = \varphi_{-\sqrt{z + c -||\rho||^2}},\quad (z\in \Omega),
  $$
  where $\varphi_{-\lambda}(gK) = \int_{K}\e^{(-i\lambda + \rho)(A(kg))}dk$ 
  denotes the corresponding spherical function and an analytic branch
  of the square root is chosen (such a choice is possible as by construction
  $(\Omega + c -||\rho||^2) \cap \R_{\leq 0}= \emptyset$). \\
  By the Cartan decomposition $G=K\exp(\overline{\mathfrak{a}^+})K$ of the semi-simple Lie group
  $G$, the closed subspace $\LpsX$ of $K$-invariant functions in $L^p(X)$ can be identified
  with $L^p(\mathfrak{a}^+, d\mu)$, where  $d\mu = \frac{d\lambda}{|c(\lambda)|^2}$ 
  denotes the Plancherel measure. 
  Hence, 
  the dual space of $\LpsX$ can be identified with 
  $L^{p'}(\mathfrak{a}^+, d\mu)\cong L^{p'}_{\#}(X), \frac{1}{p} + \frac{1}{p'}=1$.
  Let $f\in L^{p'}_{\#}(X)$. Then we have
  $$
   F_f: \Omega\to \C,\, z\mapsto \int_X f(x)\varphi_{-\sqrt{z + c -||\rho||^2}}(x) dx
  $$
  and $F_f$ is basically the spherical transform of $f$. More precisely, this means the following.
  The function $z\mapsto \sqrt{z + c -||\rho||^2}$
  maps the set $\Omega$ analytically onto the strip
  $$
  S_p = \left\{z\in \C :  \re(z)>0, |\im z| < ||\rho||\cdot |\frac{2}{p}-1| \right\}.
  $$
  For any $\lambda\in S_p$ the spherical function $\varphi_{-\lambda}$ is contained in $\LpsX$
  (see Section \ref{symmetric spaces}) and
  the $L^{p'}$ spherical transform of a function $f\in L^{p'}_{\#}(X)$ is given by
  $$
   {\cal F} f (\lambda) = \int_X f(x) \varphi_{-\lambda}(x) dx.
  $$
Therefore, we have $F_f(z) = {\cal F}f(\sqrt{z +c -||\rho||^2})$ and as both maps 
$\lambda\mapsto{\cal F}f(\lambda)$ and 
$\sqrt{\hspace{1ex}  }$ are  analytic, also $F_f$ is analytic. \\
Let us now assume $F_f = 0$. Hence, the $L^{p'}$ spherical transform of $f$ satisfies 
${\cal F}f(\lambda) = 0$ for all $\lambda\in S_p$. From the inversion formula  by Stanton and Tomas  
(Theorem \ref{thm stantom}) we may conclude $f=0$ and the claim in (a) follows.\\

The Laplacian $\Delta_{X,2}$ is a selfadjoint operator and
hence,  $\sigma(\Delta_{X,2}^{\#})\subset\R$. But as the intersection of the point
spectrum of a generator of a chaotic semigroup with the imaginary axis   is always
infinite (see \cite{MR1855839} and its erratum) part (b) follows.\\

For the proof of part (c) we proceed as follows. Let $1<p<2$ and
assume that the semigroup $\eDXps$ is hypercyclic. Then, the dual operator
$(\DXps - c)' = \Delta_{X,p'}^{\#} - c$ of its generator would have empty point spectrum
(cf. \cite[Theorem 3.3]{MR1468101}) -- a contradiction. 
\end{proof}

\subsection{Higher rank symmetric spaces}

In the higher rank case most parts of the proof of Theorem \ref{thm rank one}, part (a), remain
valid after some adaption.\\
For $p>2$ and any $\lambda \in \mathfrak{a}^* + i(1-\frac{2}{p})C(\rho)$
the spherical function $\varphi_{\lambda}$ is contained in $L^p(X)$, see Section \ref{symmetric spaces}. 

We now replace $\Omega$ by $\Omega'$, defined as the interior of
$$
(P_p-c)\cap \{z\in\C : \im(z)>0 \}.
$$
If we choose the usual analytic branch of the square root, the set $\Omega'$
is mapped by $h(z) = ||\rho||^{-1}\sqrt{z+c-||\rho||^2}$ (analytically) onto the strip
$$
\left\{ z\in\C : \re(z)>0, 0 < \im(z) < (1-\frac{2}{p})\right\}.
$$
Hence, the map 
$$
 F: \Omega' \to \LpsX,\, z\mapsto \varphi_{h(z)\rho}
$$
is analytic. Note, that each spherical function $\varphi_{h(z)\rho}, z\in\Omega'$,
is contained in $L^p(X)$ as both  $0$ and $\rho$ are contained in the convex set
$C(\rho)$, and therefore $h(z)\rho \in \mathfrak{a}^* + i(1-\frac{2}{p})C(\rho)$. \\
 As these spherical functions are eigenfunctions of $\DXps$ all assumptions in 
 Theorem \ref{thm ban} are valid and thus, we have the following:
 \begin{theorem}\label{thm higher rank}
 Let $X$ denote a symmetric space of non-compact type and let $p>2$. Then there is a $c_p > 0$
 such that for any $c > c_p$ the semigroup
 $$
  \e^{-t(\Delta_{X,p}^{\#} - c)}: \LpsX \to \LpsX
 $$
 is subspace chaotic.
 \end{theorem}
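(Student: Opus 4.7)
The plan is to apply Theorem \ref{thm ban} to the semigroup $\e^{-t(\Delta_{X,p}^\# - c)}$ on the separable Banach space $\LpsX$, with candidate map $F\colon \Omega' \to \LpsX$, $F(z) = \varphi_{h(z)\rho}$, as already set up in the paragraphs preceding the theorem statement. I would verify the three hypotheses of Theorem \ref{thm ban} in order.

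For hypothesis (i), the apex of the parabolic region $P_p$ lies at the real point $||\rho||^2 - ||\rho||^2|2/p - 1|^2 = (4||\rho||^2/p)(1 - 1/p) =: c_p$, so that for any $c > c_p$ the shifted region $P_p - c$ straddles the imaginary axis; intersecting with $\{\im z > 0\}$ still yields an open set meeting $i\R$, hence $\Omega' \cap i\R \neq \emptyset$. For hypothesis (ii), I would compute $\langle h(z)\rho, h(z)\rho\rangle = h(z)^2 ||\rho||^2 = z + c - ||\rho||^2$ and combine this with Proposition \ref{prop spherical functions}(b) to obtain $\Delta_X \varphi_{h(z)\rho} = (z + c)\varphi_{h(z)\rho}$, so that $\varphi_{h(z)\rho}$ is an eigenfunction of $\Delta_{X,p}^\# - c$ with eigenvalue $z$, placing $F(z)$ in the appropriate kernel of the semigroup generator. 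Membership of $F(z)$ in $\LpsX$ follows from Proposition \ref{prop spherical functions}(c), since $h(z)\rho \in \mathfrak{a}^* + i(1 - 2/p)C(\rho)$ by virtue of $0, \rho \in C(\rho)$ together with convexity; non-triviality $F \not\equiv 0$ is immediate from $\varphi_\lambda(eK) = 1$.

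The main technical step is hypothesis (iii): analyticity of $F_\phi(z) = \phi(\varphi_{h(z)\rho})$ on $\Omega'$ for every $\phi \in (\LpsX)'$. I would reduce this to $\LpsX$-valued holomorphy of $z \mapsto \varphi_{h(z)\rho}$. Pointwise in $gK$, the integral representation $\varphi_\lambda(gK) = \int_K \e^{(i\lambda + \rho)(A(kg))}\,dk$ is entire in $\lambda$ by differentiation under the compact $K$-integral, so $z \mapsto \varphi_{h(z)\rho}(gK)$ is holomorphic on $\Omega'$. Local $L^p$-bounds follow from Proposition \ref{prop spherical functions}(c) applied to a slightly larger region still contained in the tube $\mathfrak{a}^* + i(1-2/p)C(\rho)$. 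A standard argument (Morera's theorem together with the equivalence of weak and strong analyticity for Banach-space-valued maps) upgrades this to $\LpsX$-valued analyticity, and post-composition with the continuous functional $\phi$ preserves it. Crucially, Theorem \ref{thm ban} does \emph{not} require the injectivity condition ``$F_\phi \equiv 0 \Rightarrow \phi = 0$'', so no higher-rank analogue of the Stanton--Tomas inversion formula is needed here.

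With (i)--(iii) in place, Theorem \ref{thm ban} directly yields that the semigroup $\e^{-t(\Delta_{X,p}^\# - c)}$ is subspace chaotic, with chaotic restriction to the invariant subspace $\overline{\spa F(\Omega')} \subset \LpsX$. I expect the $L^p$-valued holomorphy in step (iii) to be the main obstacle: while pointwise analyticity and $L^p$-norm control are each individually straightforward from the material already developed, combining them into genuine Banach-space-valued analyticity requires either uniform control of $L^p$ difference quotients or a Morera-type weak-analyticity argument.
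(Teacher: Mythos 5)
Your proposal is correct and takes essentially the same route as the paper: the same region $\Omega'$, the same map $F(z)=\varphi_{h(z)\rho}$ via $h(z)=\|\rho\|^{-1}\sqrt{z+c-\|\rho\|^2}$, and the same appeal to Theorem \ref{thm ban}, whose hypotheses deliberately omit the injectivity condition so that no higher-rank analogue of the Stanton--Tomas inversion formula is needed. The only difference is that you spell out the eigenvalue computation, the convexity argument for $L^p$-membership, and the (weak) analyticity verification that the paper states more tersely.
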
  
 Since it is impossible in the higher rank case (for dimensional reasons) 
 to define an analytic map from an open subset of 
 $\Omega$ onto a neighborhood of
 $\mathfrak{a}_+^*$ in $\mathfrak{a}^* + i(1-\frac{2}{p})C(\rho)$, we are not able to apply
 the inversion formula  in Theorem \ref{thm stantom}  and therefore this
 method cannot be used to prove that the restriction $\eDXps: \LpsX \to \LpsX$ of a shift of the 
 heat semigroup to the subspace of $K$-invariant $L^p$-functions is chaotic.\\
 From Theorem \ref{thm rank one} and Theorem \ref{thm higher rank} it immediately follows
 \begin{corollary}
  Let $X$ denote a symmetric space of non-compact type and let $p>2$. Then there is a $c_p > 0$
 such that for any $c > c_p$ the semigroup
 $$
  \e^{-t(\Delta_{X,p} - c)}: L^p(X) \to L^p(X)
 $$
 is subspace chaotic. 
 \end{corollary}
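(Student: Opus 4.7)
The plan is to observe that the corollary is essentially immediate bookkeeping once the two earlier theorems are in hand. The key point is that the subspace $\LpsX$ of $K$-invariant $L^p$-functions is itself a closed subspace of $L^p(X)$ that is invariant under the heat semigroup (as noted in Section \ref{symmetric spaces}, this follows from the $G$-invariance of the heat kernel), and hence also invariant under any shift $\e^{-t(\Delta_{X,p}-c)}$. Thus any closed $\e^{-t\DXps}$-invariant subspace of $\LpsX$ automatically is a closed $\e^{-t(\Delta_{X,p}-c)}$-invariant subspace of $L^p(X)$.

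First I would dispose of the rank one case. By Theorem \ref{thm rank one} (a), there exists $c_p > 0$ such that for every $c > c_p$ the restriction $\eDXps$ on $\LpsX$ is chaotic. Since $\LpsX$ is closed in $L^p(X)$ and invariant under $\e^{-t(\Delta_{X,p}-c)}$, the subspace $\LpsX$ itself witnesses subspace chaos for $\e^{-t(\Delta_{X,p}-c)}$ on $L^p(X)$. Note in particular that $\LpsX$ is infinite-dimensional, which is consistent with Remark \ref{remark1} (5).

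For the higher rank case, I would invoke Theorem \ref{thm higher rank}: there is a $c_p > 0$ such that for every $c > c_p$ the restriction $\eDXps$ on $\LpsX$ is subspace chaotic. By the definition of subspace chaos, this yields a nonzero closed subspace $\mathcal{V} \subset \LpsX$ that is invariant under $\eDXps$ and on which the restriction $\eDXps|_{\mathcal{V}}$ is chaotic. (In fact the proof of Theorem \ref{thm higher rank} produces the explicit candidate $\mathcal{V} = \overline{\spa F(\Omega')}$ via Theorem \ref{thm ban}.) Since $\mathcal{V}$ is already closed in the norm topology of $\LpsX$, and the norm on $\LpsX$ is the restriction of the $L^p(X)$-norm, $\mathcal{V}$ is closed in $L^p(X)$ as well. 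Because $\mathcal{V} \subset \LpsX$, the semigroup $\e^{-t(\Delta_{X,p}-c)}$ acts on $\mathcal{V}$ as $\eDXps|_{\mathcal{V}}$, so $\mathcal{V}$ is invariant under the shifted $L^p$ heat semigroup and the restriction to $\mathcal{V}$ is chaotic. Hence $\e^{-t(\Delta_{X,p}-c)}$ on $L^p(X)$ is subspace chaotic.

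There is really no obstacle here; the only subtlety worth flagging is checking that ``subspace chaos inside a subspace'' propagates to subspace chaos in the ambient space, but this reduces to the trivial observations that $\mathcal{V}$ remains closed in $L^p(X)$ and that the shifted heat semigroup on $L^p(X)$ genuinely restricts to the shifted heat semigroup on $\LpsX$ (with the same generator up to the domain restriction defining $\DXps$). Combining the rank one and higher rank arguments gives the corollary uniformly, with $c_p$ taken from whichever of Theorem \ref{thm rank one} or Theorem \ref{thm higher rank} applies.
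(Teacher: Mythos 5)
Your argument is correct and is essentially the paper's own: the authors simply state that the corollary follows immediately from Theorem \ref{thm rank one} and Theorem \ref{thm higher rank}, and your write-up just makes explicit the (routine) transfer of the closed invariant subspace $\mathcal{V}\subset\LpsX$ to the ambient space $L^p(X)$.
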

 In the rank one case we identified precisely an invariant subspace $\LpsX$ on which 
 $\e^{-t(\Delta_{X,p} - c)}$ is chaotic, in the higher rank case we only know that there is an
 invariant subspace of $\LpsX$ on which the respective semigroup is chaotic.
 However, if $X$ is the product of rank one spaces it can be shown that certain shifts of
 the heat semigroup on $\LpsX, p>2,$ at least have  dense orbits:
 \begin{proposition}\label{products}
 Let $X_1,\ldots, X_k$ denote symmetric spaces of non-compact type with rank one, 
 $X= X_1\times\cdots\times X_k$ their Riemannian product, and let $p>2$. Then there is a 
 $c_p>0$ such that for any $c>c_p$ the semigroup
  $$
  \e^{-t(\Delta_{X,p}^{\#} - c)}: \LpsX \to \LpsX
 $$
 has dense orbits.
 \end{proposition}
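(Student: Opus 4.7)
The plan is to exploit the tensor-product structure arising from the decomposition $X=X_1\times\cdots\times X_k$. Because $X$ is a Riemannian product, $\Delta_X=\Delta_{X_1}+\cdots+\Delta_{X_k}$ with each summand acting on its own factor, the heat kernel of $X$ factorises as the product of the rank-one heat kernels, and since $K=K_1\times\cdots\times K_k$ is itself a product, this factorisation descends to $\LpsX$. Let $c_{p,j}>0$ denote the constant supplied by Theorem \ref{thm rank one}(a) applied to the rank-one factor $X_j$, put $c_p:=c_{p,1}+\cdots+c_{p,k}$, and for $c>c_p$ choose a splitting $c=c^{(1)}+\cdots+c^{(k)}$ with $c^{(j)}>c_{p,j}$. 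Then on $\LpsX\cong L^p_{\#}(X_1)\otimes\cdots\otimes L^p_{\#}(X_k)$ one has
\[
 \eDXps = e^{-t(\Delta_{X_1,p}^{\#}-c^{(1)})}\otimes\cdots\otimes e^{-t(\Delta_{X_k,p}^{\#}-c^{(k)})},
\]
so the task reduces to establishing dense orbits for a tensor product of rank-one semigroups each of which is already known to be chaotic.

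For each factor, the construction in the proof of Theorem \ref{thm rank one}(a) produces an open connected subset of the point spectrum of the shifted generator intersecting the imaginary axis, together with an analytic family of eigenfunctions $F_j$ assembled from spherical functions. The Hahn-Banach/identity-theorem argument appearing in the proof of Theorem \ref{thm dsw} then yields two dense subspaces ${\cal E}_0^{(j)}$ and ${\cal E}_\infty^{(j)}$ of $L^p_{\#}(X_j)$, each spanned by eigenvectors $F_j(\lambda)$, such that every $f\in{\cal E}_0^{(j)}$ satisfies $T_j(t)f\to 0$ as $t\to\infty$, and for every $f\in{\cal E}_\infty^{(j)}$ there exists, for every $t>0$, a scalar rescaling $g$ of $f$ with $T_j(t)g=f$ exactly and $\|g\|$ arbitrarily small for $t$ large.

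Passing to the product, I form the algebraic tensor products ${\cal E}_0:={\cal E}_0^{(1)}\otimes\cdots\otimes{\cal E}_0^{(k)}$ and ${\cal E}_\infty:={\cal E}_\infty^{(1)}\otimes\cdots\otimes{\cal E}_\infty^{(k)}$. Both are dense in $\LpsX$ because elementary-tensor spans are $L^p$-dense on a product measure space and $K=\prod_j K_j$ lets the $K$-invariant subspace factor compatibly. On a pure tensor $y=F_1(\lambda_1)\otimes\cdots\otimes F_k(\lambda_k)$ of eigenvectors the factorised semigroup acts by the single scalar $\exp(t\sum_j\lambda_j)$; hence pure tensors drawn from the ``${\cal E}_0^{(j)}$-families'' decay to $0$, giving ${\cal E}_0\subset{\cal B}_0$, while pure tensors drawn from the ``${\cal E}_\infty^{(j)}$-families'' satisfy $y=T(t)g$ with $g:=\exp(-t\sum_j\lambda_j)\,y$ of norm tending to $0$ as $t\to\infty$, giving ${\cal E}_\infty\subset{\cal B}_\infty$. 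Both ${\cal B}_0$ and ${\cal B}_\infty$ are then dense in $\LpsX$, and Remark \ref{remark1}(4) delivers the desired dense orbit.

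The one genuinely delicate point will be the ${\cal B}_\infty$-condition, which in general would demand synchronising factor-wise approximations at a single time $t$; here the synchronisation is automatic because eigenvector data yields the exact identity $T(t)g=y$ at \emph{every} $t$ via a scalar rescaling common to all factors. I finally note that this approach is not expected to give chaos on $\LpsX$: dense periodic points would require $\sum_j\lambda_j\in i\Q$, a condition that cannot be enforced factor-by-factor, which is presumably why the statement claims only dense orbits.
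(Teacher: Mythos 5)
Your argument is correct, and it reaches the conclusion by a route that differs from the paper's in its dynamical half. The shared part is the structural reduction: identifying $\LpsX$ with the (completed) tensor product of the $L^p_{\#}(X_j)$, factorizing the shifted heat semigroup as a tensor product of the rank-one semigroups, and taking $c_p=\sum_j c_{p,j}$ with a splitting $c=\sum_j c^{(j)}$, $c^{(j)}>c_{p,j}$ --- the paper does exactly this, with the density of elementary tensors justified via an isometry onto $L^p$ of a product measure space, so your brief justification of that step is in line with the paper's. Where you diverge: the paper then simply quotes a tensor-product theorem (Corollary 2.2 of the cited work of Weber) saying that the tensor product of two chaotic semigroups on such a completed tensor product is strongly continuous with dense orbits (even recurrent hypercyclic), whereas you re-derive the dense-orbit property from scratch by verifying the criterion of Remark \ref{remark1}(4): you tensor the eigenfunction families from the rank-one construction, note that the product semigroup acts on a pure eigen-tensor by the scalar $\e^{t\sum_j\lambda_j}$, and obtain dense subspaces inside ${\cal B}_0$ and ${\cal B}_\infty$; the synchronization-in-$t$ issue you flag is handled correctly because the exact identity $T(t)g=y$ holds for every $t$, so finite linear combinations (not just pure tensors) admit a common time, exactly as in the proof of Theorem \ref{thm dsw}. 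What each route buys: the paper's citation gives the stronger conclusion (recurrent hypercyclicity) and works for arbitrary chaotic factors, while your argument is self-contained within the tools already present in the paper (spherical eigenfunctions, Hahn--Banach plus the identity theorem, Remark \ref{remark1}(4)) and makes transparent where the spectral data enters.

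One small correction to your closing aside: the obstruction to chaos is not that $\sum_j\lambda_j\in i\Q$ ``cannot be enforced factor-by-factor.'' Since each $\Omega_j$ is open and meets $i\R$, you may choose every $\lambda_j$ in $\Omega_j\cap i\Q$; the span of the corresponding pure tensors is dense by the same identity-theorem argument applied in each factor, and finite linear combinations of such tensors are periodic (the purely imaginary rational eigenvalue sums are commensurable). So your method would in fact also give dense periodic points; the paper's weaker formulation reflects only that the quoted tensor-product result delivers hypercyclicity rather than chaos.
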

\begin{proof}
We restrict ourselves to the case of two factors $X= X_1\times X_2$. The changes below  for the 
case $k>2$ are obvious. In the proof we basically use a result from  \cite{Weber:yq} where also
more general results concerning the dynamics of tensor products of semigroups can be found.\\ 
Note, that if $X_1 = G_1/K_1$ and $X_2=G_2/K_2$ we have 
$X= (G_1\times G_2)/(K_1\times K_2)$.
By $L^p_{\#}(X_1)\otimes L^p_{\#}(X_2)$ we denote the (algebraic) tensor product of 
$L^p_{\#}(X_1)$ and $L^p_{\#}(X_2)$. For the uniform crossnorm $g_p$ on the vector space 
$L^p_{\#}(X_1)\otimes L^p_{\#}(X_2)$ as defined
in \cite[Definition 2.3]{Weber:2008ve} it follows that the completion 
$L^p_{\#}(X_1)\tilde{\otimes}_{g_p}L^p_{\#}(X_2)$ of the normed space
$(L^p_{\#}(X_1)\otimes L^p_{\#}(X_2),g_p)$ coincides with $\LpsX$. 
This follows exactly as in the proof of \cite[Lemma 2.4]{Weber:2008ve} when observing
that the map
$$\begin{array}{rl}
 \iota:  & \left( L^p_{\#}(X_1)\otimes L^p_{\#}(X_2), g_p\right) \to  L^p_{\#}(X_1\times X_2),\vspace{2mm}\\     {}       &  \sum_{j=1}^{n}f_j\otimes g_j\mapsto \sum_{j=1}^{n}f_j g_j
\end{array}$$    
is isometric with dense image. As the subspace $\LpsX$ of $K$-invariant functions is closed in
$L^p(X)$, the claim follows. Furthermore, by the arguments in \cite{Weber:2008ve} 
(cf. also \cite{MR0348538})
we may also conclude that
$$
 \e^{-t\Delta_{X,p}^{\#}} = \e^{-t\Delta_{X_1,p}^{\#}}\otimes \e^{-t\Delta_{X_2,p}^{\#}}.
$$
By Theorem \ref{thm rank one} the semigroups 
$$ 
T_1(t) = \e^{-t(\Delta_{X_1,p}^{\#}-c_1)}, \quad c_1 > c_{p,1},
$$ 
and  
$$
T_2(t) = \e^{-t(\Delta_{X_2,p}^{\#}-c_2)}, \quad c_2 > c_{p,2},
$$ 
are chaotic.
Hence, it follows from  \cite[Corollary 2.2]{Weber:yq} that their tensor product 
$T_1(t)\otimes T_2(t)$ on $ L^p_{\#}(X) = L^p_{\#}(X_1)\tilde{\otimes}_{g_p}L^p_{\#}(X_2)$
is a strongly continuous semigroup that has dense orbits (it is even recurrent hypercyclic).
As
$T_1(t)\otimes T_2(t) = \e^{-t(\Delta_{X,p}^{\#}-c)}, c= c_1 + c_2$, the result follows
if we put $c_p=c_{p,1} + c_{p,2}$.
\end{proof}

\begin{corollary}
 Let $X_1,\ldots, X_k$ denote symmetric spaces of non-compact type with rank one, 
 $X= X_1\times\cdots\times X_k$ their Riemannian product, and let $1<p\leq 2$.
 Then 
$$
 \sigma_{pt}(\Delta_{X,p}^{\#}) =\emptyset.
$$ 
\end{corollary}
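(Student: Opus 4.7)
The plan is to split the corollary into two cases, $p=2$ and $1<p<2$, which require different inputs.

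For $p=2$, I would argue directly from selfadjointness and the spherical Plancherel theorem. Since $\Delta_{X,2}^{\#}$ is the restriction of the selfadjoint operator $\Delta_{X,2}$ to the closed invariant subspace $L^2_{\#}(X)$, it is itself selfadjoint, and the spherical Fourier transform makes it unitarily equivalent to the multiplication operator $M_{\psi}$ on $L^2(\mathfrak{a}^*_+, |c(\lambda)|^{-2}d\lambda)$ with multiplier $\psi(\lambda)=\langle\lambda,\lambda\rangle+||\rho||^2$. Because the Plancherel measure is absolutely continuous with respect to Lebesgue measure and $\psi$ is a non-constant polynomial, no level set of $\psi$ carries positive measure; hence $M_{\psi}$ has empty point spectrum, and so does $\Delta_{X,2}^{\#}$.

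For $1<p<2$, let $p'$ denote the conjugate exponent, so that $p'>2$. The idea is to dualize Proposition \ref{products}. That proposition provides a constant $c>0$ for which the semigroup $e^{-t(\Delta_{X,p'}^{\#}-c)}$ on $L^{p'}_{\#}(X)$ has dense orbits, hence is hypercyclic. I would then reuse verbatim the principle employed in the proof of Theorem \ref{thm rank one}(c): by \cite[Theorem 3.3]{MR1468101}, the adjoint of the generator of a hypercyclic strongly continuous semigroup has empty point spectrum. Applying this to the generator $-(\Delta_{X,p'}^{\#}-c)$ and undoing the scalar shift yields $\sigma_{pt}(\Delta_{X,p}^{\#})=\emptyset$.

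The step I expect to require the most care is the identification of the dual of $\Delta_{X,p'}^{\#}$ with $\Delta_{X,p}^{\#}$. This needs two routine but not completely obvious ingredients. First, $(L^{p'}_{\#}(X))'\cong L^p_{\#}(X)$ via the natural $L^p$--$L^{p'}$ pairing: a bounded linear functional on $L^{p'}_{\#}(X)$ extends by Hahn--Banach to one on $L^{p'}(X)$, where it is represented by some $g\in L^p(X)$; averaging $g$ over the compact group $K$ produces an element of $L^p_{\#}(X)$ inducing the same functional on $K$-invariant functions. Second, under this identification, the adjoint of the restricted generator equals the restriction of the adjoint of the full generator, which combined with the identity $(\Delta_{X,p'})'=\Delta_{X,p}$ recalled in Section \ref{heat semigroup} gives $(\Delta_{X,p'}^{\#})'=\Delta_{X,p}^{\#}$. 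Once the duality is set up, the conclusion is immediate from the hypercyclicity input above.
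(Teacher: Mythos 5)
Your proof is correct and takes essentially the same route as the paper: for $1<p<2$ the paper likewise dualizes Proposition \ref{products}, invoking the fact from \cite{MR1468101} that the dual of the generator of a semigroup with dense orbits has empty point spectrum together with the identification $(\Delta_{X,p'}^{\#})'=\Delta_{X,p}^{\#}$. For $p=2$ the paper simply cites that $\Delta_{X,2}$ has no eigenvalues; your spherical Plancherel argument is just a fleshed-out justification of that same fact restricted to $L^2_{\#}(X)$.
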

This implies in particular that $\DXp$ has no $K$-invariant eigenfunctions in the case 
$1 < p \leq 2$.
\begin{proof}
For $p=2$ this is clearly true as already $\Delta_{X,2}$ has no eigenvalues.
For $1<p<2$ this follows immediately from Proposition \ref{products} and the fact that the dual operator
of the generator of a strongly continuous semigroup with dense orbits has empty point spectrum
(cf. \cite{MR1468101}) when observing that $(\Delta_{X,p}^{\#})' = \Delta_{X,p'}^{\#}.$
\end{proof}

\subsection{Comparison with symmetric spaces of euclidean or compact type}

Any globally symmetric space $X$ is the product $X=X_-\times X_0 \times X_+$ of 
a symmetric space of non-compact type $X_-$, a euclidean space $X_0\cong\R^n$, and
a symmetric space of compact type $X_+$ which is always compact (see \cite{MR514561}).
The aim of this section is to show that such a chaotic dynamics, as in the setting of 
symmetric spaces of non-compact type, cannot occur in the two other situations.

\begin{theorem}
 Let $X=X_0\times X_+$ denote a symmetric space whose factors are a symmetric space $X_0$ 
 of euclidean type and a symmetric space $X_+$ of compact type (it is allowed that one of the factors is trivial). Then, for any
 $p\in [1,\infty)$ and any $c\in\R$ the semigroup
 $$
  \e^{-t(\DXp - c)}: L^p(X) \to L^p(X)
 $$
 is not subspace chaotic.
\end{theorem}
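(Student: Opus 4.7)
The plan is to appeal to the necessary spectral condition for chaos already used in the proof of Theorem~\ref{thm rank one}(b): the generator of any chaotic $C_0$-semigroup has infinitely many eigenvalues on the imaginary axis (\cite{MR1855839}). For any closed invariant subspace $V\neq\{0\}$ on which the restricted semigroup would be chaotic, its generator $A_V$ is the part of $A=-(\DXp-c)$ in $V$, so every eigenvector of $A_V$ in $V\subset L^p(X)$ is automatically an eigenvector of $A$ with the same eigenvalue; in particular $\sigma_{pt}(A_V)\subset\sigma_{pt}(A)$. It therefore suffices to show that $\sigma_{pt}(\DXp)\subset\R$, for then $\sigma_{pt}(A)\cap i\R\subset\{0\}$ is finite and no subspace of $L^p(X)$ can carry a chaotic restriction.

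To establish $\sigma_{pt}(\DXp)\subset\R$, I would use the factorization $X=X_0\times X_+$ and treat three cases. If $X_0$ is trivial, then $X=X_+$ is compact and elliptic regularity makes the $L^2$-eigenfunctions of $\Delta_{X_+}$ smooth and hence members of every $L^p(X_+)$, giving $\sigma_{pt}(\Delta_{X_+,p})=\{\mu_k\}_{k\geq 0}\subset[0,\infty)$. If $X_+$ is trivial, then $X=\R^n$ and a Fourier-transform argument shows that $\sigma_{pt}(\Delta_{\R^n,p})=\emptyset$ for every $p\in[1,\infty)$: any $L^p$-eigenfunction with eigenvalue $\lambda$ would be smooth (by elliptic regularity) with tempered Fourier transform supported on the sphere $\{|\xi|=\sqrt{\lambda}\}$ when $\lambda\geq 0$ (or empty otherwise), a set of Lebesgue measure zero, which contradicts $L^p$-membership for $p<\infty$.

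The main step is the mixed case. Given $h\in L^p(X)$ with $\Delta_X h=\lambda h$, I would expand in the $L^2$-orthonormal basis $\{\phi_k\}$ of smooth Laplace eigenfunctions on the compact factor $X_+$ (with eigenvalues $\mu_k$) by defining the coefficients $h_k(x)=\int_{X_+}h(x,y)\overline{\phi_k(y)}\,dy$. By Fubini, H\"older, and the boundedness of $\phi_k$ on the compact $X_+$, each $h_k$ belongs to $L^p(\R^n)$; differentiating under the integral and using the self-adjointness of $\Delta_{X_+}$ yields $\Delta_{\R^n}h_k=(\lambda-\mu_k)h_k$. The absence of $L^p$-eigenfunctions on $\R^n$ forces $h_k\equiv 0$ for every $k$, and since $\spa\{\phi_k\}$ is dense in $L^{p'}(X_+)$, duality yields $h\equiv 0$ in $L^p(X)$. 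Hence $\sigma_{pt}(\Delta_{X,p})=\emptyset$ in the mixed case, which completes the verification of $\sigma_{pt}(\DXp)\subset\R$.

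The main technical obstacle is this mixed case: justifying the partial Fourier expansion in $L^p$ rather than $L^2$ on the compact factor, and transferring the eigenvalue equation to each coefficient $h_k$. The compactness of $X_+$ is essential here, since it bounds the $\phi_k$ uniformly in $y$ and embeds every $L^q(X_+)$ into $L^1(X_+)$, making the coefficient integrals absolutely convergent and the interchange of $\Delta_{\R^n}$ with the integral legitimate.
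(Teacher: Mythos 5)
Your overall reduction is correct and is essentially the paper's: subspace chaos on a closed invariant subspace would force infinitely many eigenvalues of the (restricted, hence of the full) generator on the imaginary axis, so it suffices to show the point spectrum of $\DXp$ is real. The paper gets this by citing that on Euclidean and compact-type spaces the $L^p$ spectrum coincides with the (real) $L^2$ spectrum and that the $L^p$ spectrum of a product is the sum of the spectra; you instead attempt a direct computation of the point spectrum, which is legitimate in principle but contains a false step. The claim $\sigma_{pt}(\Delta_{\R^n,p})=\emptyset$ for \emph{every} $p\in[1,\infty)$ is wrong: a tempered distribution supported on a sphere need not be a function, and it can be the Fourier transform of an $L^p$ function once $p>2$. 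Concretely, for $n\geq 2$ the function $u=\widehat{d\sigma}$ (Fourier transform of surface measure on the unit sphere) is smooth, satisfies $\Delta_{\R^n}u=u$, decays like $|x|^{-(n-1)/2}$ together with all derivatives, and hence lies in $W^{2,p}(\R^n)=\dom(\Delta_{\R^n,p})$ for all $p>2n/(n-1)$; so the Euclidean Laplacian on $L^p$ does have (real, positive) eigenvalues for large $p$. Your ``measure-zero support contradicts $L^p$-membership'' argument is only valid for $p\leq 2$, where Hausdorff--Young makes $\hat u$ an $L^{p'}$ function. Consequently the conclusion of your mixed case ($h_k\equiv 0$, hence $\sigma_{pt}(\Delta_{X,p})=\emptyset$) also fails in general: $h(x,y)=\widehat{d\sigma}(x)\phi_k(y)$ is an $L^p$ eigenfunction of $\Delta_{X_0\times X_+}$ for such $p$.

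The gap is repairable because all your first paragraph needs is realness, not emptiness. The same Fourier support argument shows that if $u\in L^p(\R^n)$, $u\neq 0$, and $\Delta_{\R^n}u=\lambda u$, then $\hat u$ is supported on $\{|\xi|^2=\lambda\}$, which is empty unless $\lambda\in[0,\infty)$; hence $\sigma_{pt}(\Delta_{\R^n,p})\subset[0,\infty)$ for all $p$. In the mixed case, if $h\neq 0$ then some $h_k\neq 0$ and $\lambda-\mu_k\in[0,\infty)$, so again $\lambda\in\R$. In the purely compact case note that the inclusion you actually need is the one you do not argue: an $L^p$ eigenfunction is smooth by elliptic regularity, hence lies in $L^2(X_+)$, so its eigenvalue is one of the real $\mu_k$ (the inclusion $\{\mu_k\}\subset\sigma_{pt}$ that you state is irrelevant here). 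With these corrections you get $\sigma_{pt}(\DXp)\subset[0,\infty)$ in all cases, which suffices, and your argument is then a self-contained alternative to the paper's citation of the $p$-independence of the spectrum on each factor and of the product formula for $L^p$ spectra.
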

\begin{proof}
The intersection of the point spectrum of the generator of a chaotic semigroup with the imaginary axis is always infinite (see \cite{MR1855839} and its erratum).
Hence, the theorem follows if we can prove  
$\sigma(\DXp)\subset \R$. To see this, we note that
$\sigma(\Delta_{X,2}) = [0,\infty)$ if $X$ is a euclidean space and
$\sigma(\Delta_{X,2}) = \{ \lambda_j\in \R : j\in \N\}$ if $X$ is a symmetric space of 
compact type. As the $L^p$ spectrum $\sigma(\DXp)$ coincides in these cases 
with the $L^2$ spectrum (see e.g. \cite{MR1250269}) and as the $L^p$ spectrum 
$\sigma(\Delta_{X_0\times X_+,p})$ of the product $X_0\times X_+$  coincides with the sum 
$\sigma(\Delta_{X_0,p}) + \sigma(\Delta_{X_+,p})$ (see \cite[Theorem 2.1]{Weber:2008ve})
the result follows. 
\end{proof}

\subsubsection*{Acknowledgement}
We want to thank the referee who made many valuable comments that greatly
improved the exposition of this paper.


\bibliographystyle{amsplain}
\bibliography{dissertation,hypercyclic,symmetricSpaces}

\end{document}